\newtheorem{Theorem}{Theorem}[section]
\newtheorem{theorem}[Theorem]{Theorem}
\newtheorem{lemma}[Theorem]{Lemma}
\DeclareMathOperator{\diag}{diag}
\newcommand{\R}{{\mathbb R}}
\newcommand{\N}{{\mathbb N}}
\newcommand{\Z}{{\mathbb Z}}
\newcommand{\al}{\alpha}
\newcommand{\be}{\beta}
\newcommand{\ga}{\gamma}
\newcommand{\Ga}{\Gamma}
\newcommand{\de}{\delta}
\newcommand{\ep}{\epsilon}
\newcommand{\De}{\Delta}
\newcommand{\Om}{\Omega}
\newcommand{\la}{\lambda}
\newcommand{\rest}{\big\arrowvert}
\newcommand{\upto}{\nearrow}
\numberwithin{equation}{section}
\title{Nonnegative solutions with a nontrivial
  nodal set for elliptic equations on smooth symmetric domains}
\author{P. Pol\'a\v cik\footnote{Supported in part by NSF grant DMS-0900947}
\\
\small School of Mathematics, University of Minnesota\\
\small Minneapolis, MN 55455, USA
\\~\\
Susanna Terracini\footnote{Supported in part by PRIN2009 grant ``Critical Point Theory and Perturbative
Methods for Nonlinear Differential Equations''}\\
\small  Dipartimento di Matematica e Applicazioni,\\  \small Universit\`a
 di Milano-Bicocca\\
\small  Piazza Ateneo Nuovo 1,  20126  Milano, Italy
}
\date{} 
\begin{document}
 
\maketitle

\begin{quote}{\small
    {\bfseries Abstract.} 
We consider a semilinear elliptic equation on a 
smooth bounded domain $\Om$ in $\R^2$, assuming that
both the domain and the equation are invariant under reflections about
one of the  coordinate axes, say the $y$-axis. It
is known that nonnegative solutions of the Dirichlet problem for such
equations are symmetric about the  axis, and, if  strictly
positive, they are also decreasing in $x$ for $x>0$. Our goal is to
exhibit examples of equations which admit 
nonnegative, nonzero solutions for which the second property
fails; necessarily, such solutions have a nontrivial nodal set in
$\Om$. Previously, such examples were known for nonsmooth domains only.
} 
 \end{quote}

{\emph{Key words}:} semilinear elliptic equation, 
planar domain, nonnegative solutions,  nodal set

{\emph{AMS Classification:} 35J61, 35B06, 35B05}

 \bigskip

\section{Introduction and  the main result}\label{intro}

Consider the elliptic  problem
   \begin{alignat}{2}
    \De u+f(x',u)&=0,&\quad &x\in\Om,\label{eq}\\
    u&=0,&\quad &x\in\partial\Om,\label{bc}
\end{alignat}
where $\Om$ is a  bounded domain in $\R^N$, $x=(x_1,x')\in
\R\times\R^{N-1}$, 
and  $f:\R^{N-1} \times \R\to\R$  is a continuous function
which is locally Lipschitz  in the last variable. 
We assume that $\Om$ is convex in $x_1$ and  
reflectionally symmetric about the 
hyperplane  
$$H_0=\{(x_1,x')\in \R\times\R^{N-1}:x_1=0\}.$$ 
By a well-known theorem of  Gidas, Ni, and Nirenberg
\cite{Gidas-N-N:bd} and its more general versions for nonsmooth
domains, as
given by Berestycki and
Nirenberg \cite{Berestycki-N} and Dancer \cite{Dancer:symm},
each positive solution $u$ of
\eqref{eq}, \eqref{bc}  is
even in $x_1$:
\begin{equation}
  \label{symm}
  u(-x_1,x')=u(x_1,x')\quad((x_1,x')\in\Om),
\end{equation}
and, moreover, $u(x_1,x')$ decreases with increasing $|x_1|$:
\begin{equation}
  \label{mon}
  u_{x_1}(x_1,x')<0\quad((x_1,x')\in\Om, x_1>0).
\end{equation}
It is also well-known  that this result is not valid in general 
for nonnegative solutions; consider, for example, the solution
$u(x)=1+\cos x$ of  the equation $u''+u-1=0$ on the interval $\Om=(-3\pi,3\pi)$.
However, as recently proved in \cite{P:symm-ell},
nonnegative solutions still enjoy the symmetry property
\eqref{symm}. Of course, by the Dirichlet boundary condition,
\eqref{mon}  necessarily fails unless the
solution is strictly positive in $\Om$. A further investigation in
\cite{P:symm-ell} revealed that the nodal set of each nonnegative
solution $u$ has interesting symmetry properties itself. In particular,
each nodal domain of $u$ is convex in $x_1$ and symmetric about a
hyperplane parallel to $H_0$ (a nodal domain refers to a  
connected component of $\{x\in \Om: u(x)\ne 0\}$).
These results, like those in \cite{Berestycki-N}, are valid for
fully nonlinear elliptic equations
 \begin{equation}
    F(x,u,D u,D^2u)=0,\quad x\in\Om,\label{eqfully}
\end{equation}
under suitable symmetry assumptions,
 and their proofs employ the
method of moving hyperplanes \cite{Alexandrov, Serrin:symm}
as the basic geometric technique. Related results can be found in 
the surveys  \cite{Berestycki:survey, Kawohl:survey, Ni:survey,
  P:symm-survey}, monographs \cite{Du:bk, Fraenkel:bk, Pucci-S:bk}, or
more recent papers \cite{Brock:symm, Dolbeault-F:symm, Da-L-S,
  Foldes:bd}, among others.

In this work, we are concerned with  nonnegative
solutions which do have a nontrivial nodal set in $\Om$.  
In \cite{P:symm-ell}, several examples of problems \eqref{eq},
\eqref{bc} admitting such solutions were given, including explicit 
examples with solutions whose nodal set consists of  line segments,
 as well as a more involved construction with non-flat nodal curves. 
In all these examples,  the domain $\Om$ has corners and it is not even
of class $C^1$. On the other hand, it was
 also proved in \cite{P:symm-ell} that for some $C^1$ domains $\Om$
satisfying additional geometric conditions, no solutions with
 nontrivial nodal sets can exist, no
 matter how the nonlinearity $f=f(x',u)$ is chosen. For example,
this is the case if $\Om$ is a $C^1$  planar domain such that  the
``cups'' $\{(x_1,x_2)\in \Om:x_1>\la\}$, $\la\in \R$, are all connected and
$\partial \Om$  contains a line segment parallel to the $x_2$-axis. 
 These observations  raise the following natural question:

 \medskip

 \noindent
 {\bf Question.}  \emph{
Does  smoothness 
of $\Om$ alone preclude the existence of nonnegative solutions with  nontrivial
nodal sets for problem
\eqref{eq}, \eqref{bc}?} 

\medskip

The aim of  this paper is to  show that  the answer is negative even 
when analyticity of  both the domain and the nonlinearity is assumed.
We stress that the dependence of the nonlinearity on $x'$  is essential here. 
Indeed, a theorem from \cite{P:symm-serrin} states that when the class 
of equations is restricted to  the homogeneous ones, $\De u+f(u)=0$, and $\Om$ is smooth, then each nonnegative solution is either identical to 0 (and hence $f(0)=0$)
or strictly positive.

Our construction is in two dimensions, hence we  
use the coordinates $(x,y)$ instead of $(x_1,x')$.  
We consider affine equations of the form 
\begin{alignat}{2}
    \De u+4 u+h(y) &=0,&\quad &(x,y)\in\Om,\label{eeq}\\
    u&=0,&\quad &(x,y)\in\partial\Om.\label{ebc}
\end{alignat}
Here is our main result:
\begin{theorem}\label{thm}
  There exist a continuous function
  $h:\R\to\R$ and a bounded analytic domain $\Om\subset \R^2$, 
which is convex in $x$ and symmetric about  the vertical axis 
$H_0=\{(0,y): y\in \R\}$, such that problem
\eqref{eeq}, \eqref{ebc} has a nonnegative solution $u$ whose nodal
set in $\Om$ consists of two analytic curves.
\end{theorem}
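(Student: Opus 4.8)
The plan is to construct the solution $u$ explicitly as a correction of an elementary model and then simply \emph{define} $h$ by $h(y):=-(\De u+4u)$; the whole difficulty lies in arranging the nodal set. The relevance of the coefficient $4$ is the identity $(\cos^2x)''+4\cos^2x\equiv2$: for every $\al>0$ the function $2\al\cos^2x$ is a nonnegative solution of \eqref{eeq} with the constant inhomogeneity $h\equiv-4\al$, it is even in $x$, and within the strip $\{|x|<3\pi/2\}$ its nodal set is the pair of analytic lines $\{x=\pm\pi/2\}$ --- two curves symmetric about $H_0$. More generally $u=a(y)+b(y)\cos2x$ with $b$ affine solves \eqref{eeq} with $h=-(a''+4a)$. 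Such ``model'' solutions already realize nonnegative solutions with a two-curve nodal set, but never on a \emph{bounded} domain: the Dirichlet condition forces $\partial\Om\subset\{u=0\}$, while for a model the zero set is (a piece of) a union of straight lines. So the plan is to add to a model a correction $v$ with $\De v+4v=0$, even in $x$ --- which preserves \eqref{eeq} (with a possibly different $h(y)$) and the $x$-symmetry --- and to use the freedom in $v$ to bend the ``outer'' component of the zero set into a bounded, analytic, convex-in-$x$ Jordan curve, which becomes $\partial\Om$, while leaving behind an interior nodal set consisting of exactly two analytic curves $\Ga_\pm$ interchanged by $x\mapsto-x$.

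To carry this out one uses that, since $u\ge0$, it must vanish to \emph{second} order along $\Ga_\pm$ (there $u=0$ and $\grad u=0$), while along $\partial\Om$ it vanishes simply; evaluating the equation at a point of $\Ga_\pm$ shows this forces $h<0$ there, so $h$ will change sign. One route is to prescribe $\Ga_+$ as an analytic arc and impose that $u$ and its normal derivative vanish along it; since $\De$ is elliptic every analytic arc is noncharacteristic, so a Cauchy--Kovalevskaya argument --- or an analytic implicit function theorem in the coefficients of a finite combination of the admissible modes $a(y)$, $b(y)\cos2x$ with $b$ affine, and $\cosh(2\sqrt{k^2-1}\,y)\cos2kx$, $\sinh(2\sqrt{k^2-1}\,y)\cos2kx$ for $k\ge2$ --- determines the germ of $u$ near $\Ga_+$ together with a compatible $h(y)$, after which one checks that this germ closes up to a global nonnegative solution on a bounded analytic domain. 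A cleaner organization of the same idea: choose $\Om$ so that $4$ is its principal Dirichlet eigenvalue, with eigenfunction $\phi_1>0$; pick the inhomogeneity subject to the solvability constraint $\int_\Om h\,\phi_1=0$; let $u_p$ solve \eqref{eeq} with $u_p=0$ on $\partial\Om$; and set $u:=u_p+t^*\phi_1$ with $t^*:=\sup_\Om(-u_p/\phi_1)$. Then $u\ge0$, $u=0$ on $\partial\Om$, and its interior nodal set is exactly the set on which $-u_p/\phi_1$ attains its maximum. In either description the construction is reduced to choosing the free data so that this set is two analytic curves.

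The main obstacle is exactly that reduction. Forcing the nodal set to be \emph{precisely} two analytic curves --- not two isolated points, not a set meeting the symmetry axis, not a set with spurious extra pieces --- means producing a nonnegative $u$, even in $x$, with $u(0,y)>0$ for the relevant $y$, that dips to a genuine second-order zero along a single graph $x=\ga(y)>0$ over an interval of $y$'s; equivalently $u(\ga(y),y)\equiv0$, a one-parameter family of zero-value critical points of $u$. This is a nongeneric degeneracy that must be engineered into the free data and then shown to persist. Along the way one must: (i) keep $u\ge0$ on all of $\ol\Om$, not just near the two creases --- the risk being that bending the model turns a double zero into a sign change --- by a maximum-principle or barrier argument; (ii) verify that the ``outer'' zero set is indeed a single analytic Jordan curve which is convex in $x$; and (iii) check that $\Ga_\pm$ actually reach $\partial\Om$ and split $\Om$ into three nodal domains, an interior nodal loop being incompatible with the convexity in $x$ of the nodal domains proved in \cite{P:symm-ell}. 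Once such $u$ and $\Om$ are in hand, $h(y):=-(\De u+4u)$ is a function of $y$ alone by construction and continuous, and analyticity of $\partial\Om$ and of $\Ga_\pm$ follows from analytic elliptic regularity, these being the critical/nodal sets of the real-analytic solution $u$.
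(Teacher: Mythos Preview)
Your proposal is a thoughtful plan, but it is not a proof: you yourself flag ``the main obstacle'' and list items (i)--(iii) that remain to be done, and neither of your two routes gets past them. The Cauchy--Kovalevskaya idea only yields a local germ near a prescribed arc $\Ga_+$; you offer no mechanism for why that germ should extend to a \emph{bounded} nonnegative solution whose outer zero set closes up to an analytic Jordan curve convex in $x$, nor for how the unknown $h(y)$ is actually determined by Cauchy data on a non-vertical arc. The principal-eigenvalue route is even less constrained: the interior nodal set of $u=u_p+t^*\phi_1$ is the contact set $\{-u_p/\phi_1=\max\}$, which for generic $h$ is a finite set of points; arranging it to be exactly two analytic arcs is an infinite-codimension condition on $h$, and you give no argument for how to meet it while simultaneously fixing $\Om$ so that $4$ is its principal eigenvalue. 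In short, the ``reduction'' you identify is the entire content of the theorem, and it is left open.

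The paper's approach bypasses all this by differentiating: set $v:=u_x$. Then $v$ solves the \emph{homogeneous} equation $\De v+4v=0$ --- the troublesome unknown $h(y)$ disappears --- and is odd about each line $\{x=k\pi\}$; moreover, since $u\ge0$ vanishes to second order on its interior nodal curves, those curves lie in $v^{-1}(0)$. So one first builds $v$ with the right nodal picture and only afterwards recovers $u$ and $h$. Concretely, one starts from the explicit solution $w(x,y)=(\cos(y\sqrt3)-\cos x)\sin x$, whose nodal set already has the desired combinatorics but is made of straight segments meeting at degenerate zeros, and perturbs it by $\ep\psi$ with $\psi=\sum_{k\in A}c_k\sin(kx)\cosh(y\sqrt{k^2-4})$, five coefficients chosen by solving a $5\times5$ linear system so that certain degenerate crossings are smoothed while the one at $z_0=(\pi,\pi/\sqrt3)$ is preserved. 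A local Morse/equal-angle analysis at each degenerate zero of $w$ then pins down the nodal set of $v=w+\ep\psi$ for small $\ep$, and $\partial\Om$ is \emph{defined} as the outer nodal curve of $v$. Finally $u(x,y):=\int_{\mu(y)}^x v(t,y)\,dt$ is automatically nonnegative by the oddness of $v$ about $\{x=k\pi\}$, and a two-line computation gives $\De u+4u=-h(y)$ with an explicit continuous $h$. The decisive idea you are missing is this passage to $v=u_x$: it decouples the construction of the nodal set (a linear homogeneous problem with an abundant supply of explicit solutions to perturb) from the recovery of $u$ and $h$, which becomes a routine integration.
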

By an analytic domain we mean a Lipschitz domain
whose boundary is an analytic submanifold of $\R^2$. 
A curve refers to a one-dimensional submanifold of $\R^2$, possibly
with boundary.

 The nodal set of the solution
$u$, including the boundary of $\Om$, is plotted in Figure~\ref{fig1}. In
accordance with \cite[Theorem 2.2]{P:symm-ell}, 
each   nodal domain of $u$ is symmetric about a line parallel to
the $y$-axis, as indicated by dashed lines in Figure~\ref{fig1},  and the
solution $u$ is  symmetric about that line within the nodal domain.

         \begin{figure}[h]\label{fig1}
         \addtolength{\belowcaptionskip}{10pt}
         \begin{center}
         \includegraphics[scale=.2]{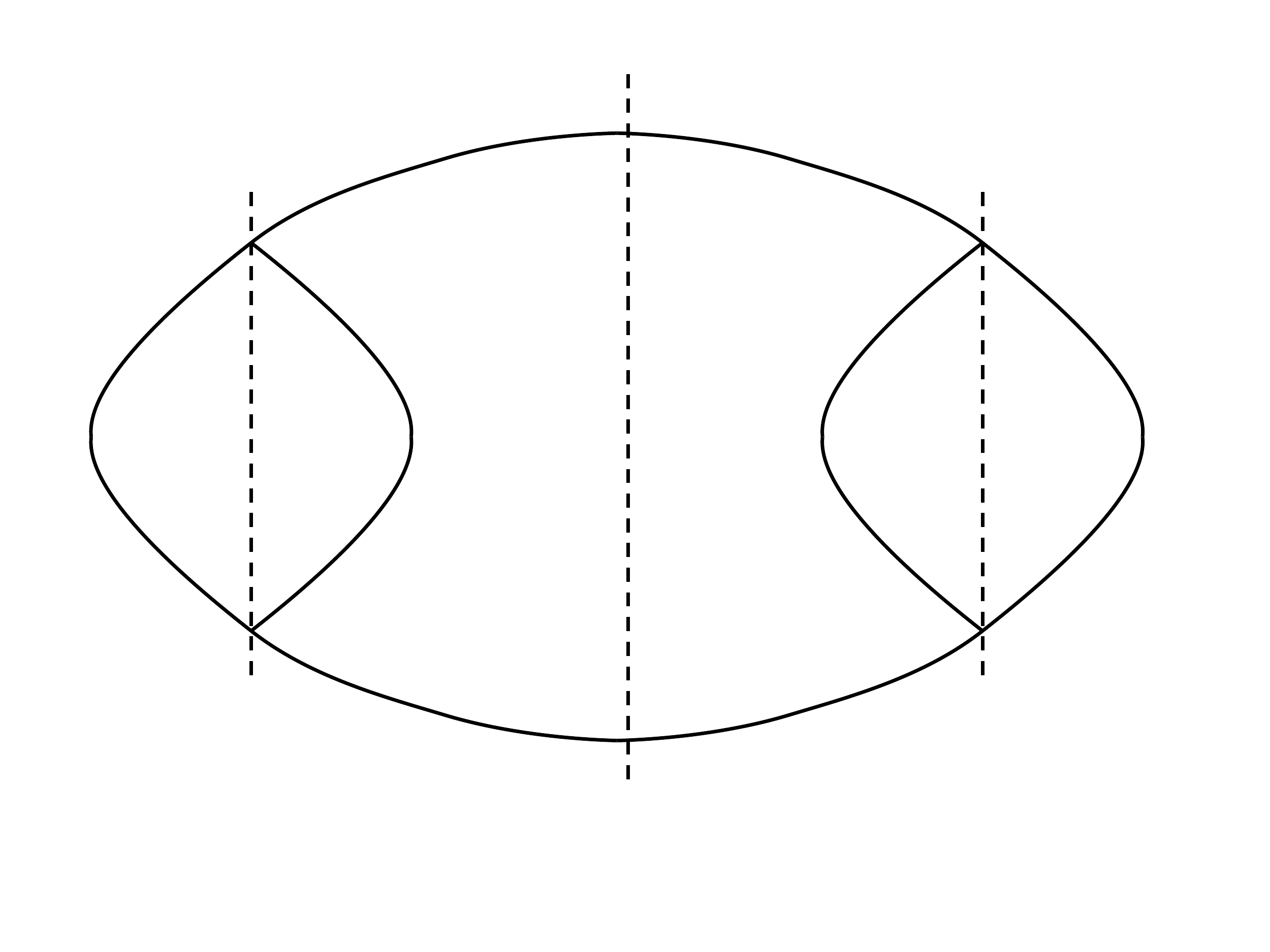}
         \caption[The nodal set of $u$]{\small The nodal set (the solid curves) of $u$.\label{fig1}}   
         \end{center}
         
         \end{figure}

\begin{figure}[h]
         \addtolength{\belowcaptionskip}{10pt}
        %
       \begin{center} \includegraphics[scale=.2]{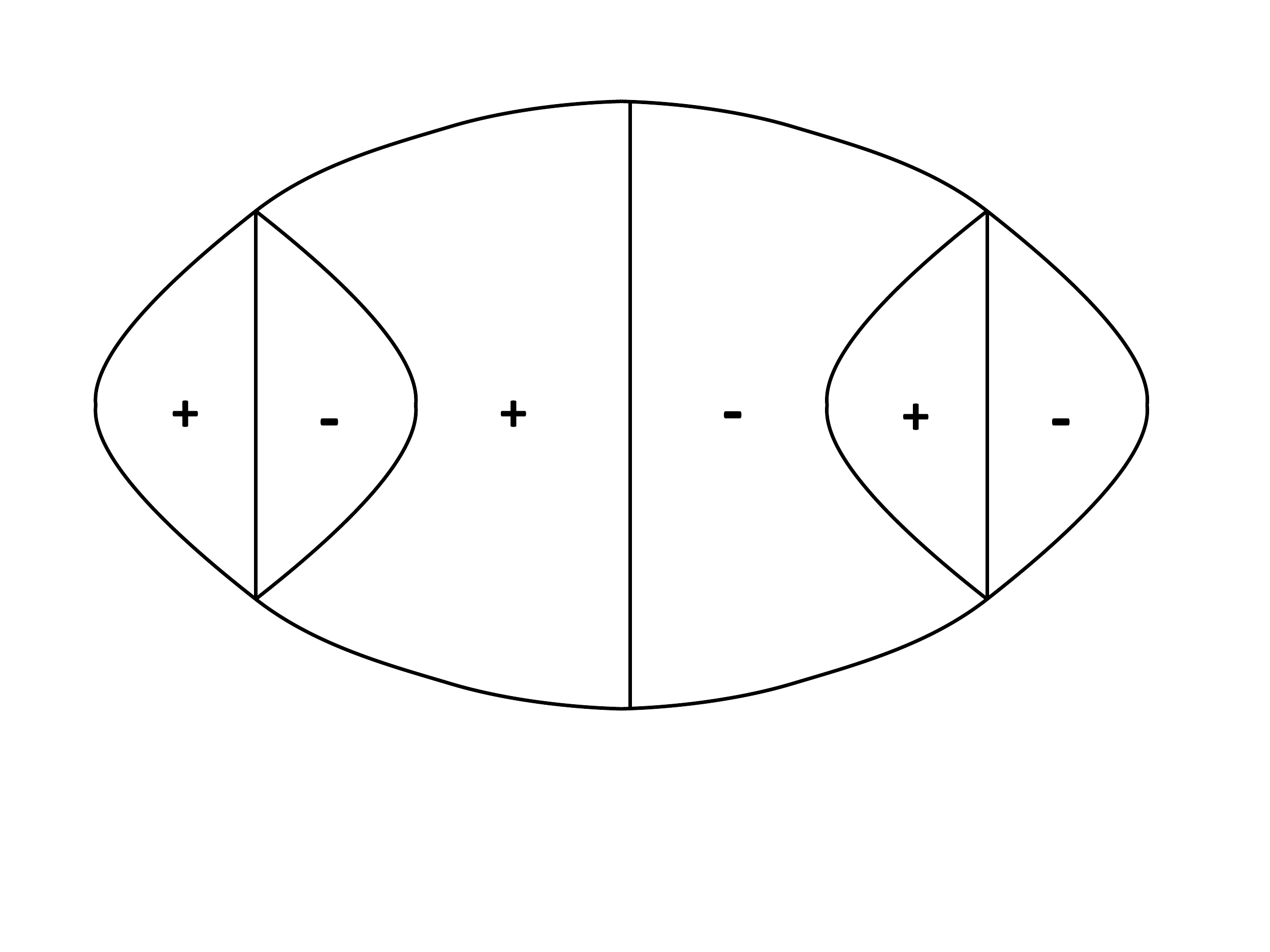}
         \caption[The nodal set $v$]{\small The nodal set and the
           signs  of $v$.\label{fig2}}  
          \end{center}
         \end{figure}

 Similarly to \cite{P:symm-ell}, our construction links the solutions
 of \eqref{eeq} to some eigenfunction of the Laplacian. Specifically,
 if   
 $u$ is a  solution of \eqref{eeq}, then $v=u_x$ satisfies 
 $\De v+4 v=0$ in $\Om$. Moreover, if $u\ge 0$ in $\Om$, then $v=0$ on all 
 nodal curves of $u$ in $\Om$. Also, one has $v=0$ on $H_0$ and all the
 other  symmetry lines of $u$ parallel to $H_0$. Thus, 
 from Figure~\ref{fig1} we  infer that the nodal set of  $v$ should
 look like as indicated  in Figure~\ref{fig2}.

 Thus a key prerequisite for our construction is 
 a solution of $\De v+4 v=0$ with the nodal structure 
 as in Figure~\ref{fig2}. Once such a solution $v$ is found,
 we complete the construction by exhibiting an antiderivative of $v$ with 
respect to $x$ which satisfies \eqref{eeq}, \eqref{ebc} for some function
$h$, is nonnegative,  and has the nodal set as in Figure~\ref{fig1}.

To indicate how we find  a solution $v$  of $\De v+4 v=0$ with the
desired nodal
structure, let us first consider  a  solution of the same equation
given explicitly by  
$$w(x,y):=(\cos (y\sqrt3)-\cos x)\sin x.$$ 
A scaled version of this function was used in one of the  examples of
\cite{P:symm-ell}; in fact, $w=u_x$,  where
 $u(x,y)= (\cos x - \cos(\sqrt 3 y))^2/2$
 is a nonnegative solution of  \eqref{eeq}, \eqref{ebc} with 
$h(y)=-4 \sin^2(\sqrt 3  y)$ and
$\Om=\{(x,y)\in\R^2\;:\;|x-2\pi\pm\sqrt 3 y|<2\pi\}$.
As depicted in Figure~\ref{figa}, 
the nodal set of $w$ in $\bar \Om$ consists of line segments 
which intersect  at degenerate zeros of $w$.  Our goal is to
perturb $w$, adding to it a small multiple of another solution of $\De
v+4 v=0$, so as to deform the nodal structure in Figure~\ref{figa} to
that in Figure~\ref{fig2}. Thus, after the perturbation
the solution looses some of its degenerate zeros, producing smooth nodal
curves near the original corners of $\Om$,  while other degenerate
zeros are kept intact to make intersections of nodal curves with
$\partial \Om$ possible. The details of this perturbation analysis are
given in the next section, together with the proof of Theorem \ref{thm}.

      \begin{figure}[h] 
      \begin{center}
         \addtolength{\belowcaptionskip}{10pt}
          \includegraphics[scale=.2]{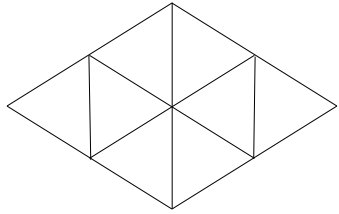}\qquad
         \caption[The nodal set of $w$]{\small The nodal set of 
            $w$ in  $\bar\Om$.\label{figa}}   
          \end{center} 
         \end{figure}

We remark that for the proof of Theorem \ref{thm}, it is not necessary
that $v$ vanishes on the whole boundary of $\Om$. However, this is the
case in our construction and it yields the  extra
information that the solution $u$ also satisfies
\begin{equation}
  \label{bcN}
  \frac{\partial u(x,y)}{\partial \nu}=0\quad (x,y)\in \partial\Omega,
\end{equation}
where $\nu$ is the outer unit normal vector field on $\Om$. Thus 
$u$ is a nonzero nonnegative solution of the overdetermined 
problem \eqref{eeq}, \eqref{ebc}, \eqref{bcN} (note that \cite{Serrin:symm}
rules out the existence of such solutions for spatially homogeneous
equations, unless $\Om$ is a ball). 

Finally, we remark that the domain $\Om$ in our construction is also
convex in $y$ and symmetric about the $x$-axis, and the function
 $h$ is an even function of $y$. Thus the only obstacle to a possible application of
the method of moving hyperplanes in the $y$-direction  
(which, obviously, would rule out the nodal
structure in Figure~\ref{fig1}) is the fact that $h$ is not decreasing in
$y>0$.

\section{Proof of Theorem \ref{thm}}
Following the above outline, we first want to find a solution of the linear
equation
\begin{equation}
  \label{eqlin}
  \De v+4 v=0
\end{equation}
with a suitable nodal structure.
To start with, we consider the function 
\begin{equation}
  \label{wdef}
  w(x,y):=(\cos (y\sqrt3)-\cos x)\sin x.
\end{equation}
 It is a solution of \eqref{eqlin} on $\R^2$ whose  nodal set  consists of
the lines
\begin{align}
&\{x=k\pi\}, \ k\in \Z, \quad\text{and}\quad \label{nodlinesx}\\
&\{y=\pm\frac{1}{\sqrt 3}(x+2k\pi)\},\ k\in \Z. \label{nodlinesxy}
\end{align}

Moreover, $w$ is odd about each of these nodal lines
 and it is even about  the horizontal lines $y=k\pi/\sqrt3$, $k\in \Z$.
We say that $w$  is even (resp. odd) about a line, if $w=w\circ P$
(resp. $w=-w\circ  P$),
where $P$ is   the reflection  about the  line in question.

Our goal is to find a perturbation $w+\ep \psi$ of  $w$
with the  nodal structure as depicted in Figure~\ref{fig2}.   This will be accomplished by means
of  a function $\psi:\R^2\to\R$ 
with the following properties:
\begin{itemize}
 \item[(W1)]\quad {$\psi$ is a solution of \eqref{eqlin} on
     $\R^2$,}
\quad 
 \item[(W2)]\quad
$\psi(k\pi,\cdot)\equiv 0$ and $\psi $  
is odd about the vertical line $\{x=k\pi\}$ 
 for each $k\in \Z $,
 \item[(W3)]\quad
 $\psi$  is even about the $x$-axis,   
\item[(W4)]\quad $D\psi(z_0)=0,\quad D^2\psi(z_0)=0$,  
\item[(W5)]\quad $\psi_x(z_1)<0$,\quad  $\psi_x(z_2)>0$,\quad $\psi_{xy}(z_2)>0$,
\end{itemize}
where 
\begin{equation}
  \label{zdef}
 z_0=(\pi
,\pi/\sqrt3),\quad z_1=(0,0),\quad z_2=(0,2\pi/\sqrt3), 
\end{equation}
and $D\psi$ and $D^2\psi$ stand for the gradient and the Hessian
matrix of $\psi$, respectively. 
Note that  $z_0$, $z_1$, $z_2$  are the only degenerate zeros
of $w$ in $[0,\pi]\times [0,2\pi/\sqrt3)$ (see Figure~\ref{figb}).
\begin{figure}[h]
         \addtolength{\belowcaptionskip}{10pt}
         \addtolength{\abovecaptionskip}{-2cm}  
         \begin{center}
\hspace{-2cm} \includegraphics[scale=.35]{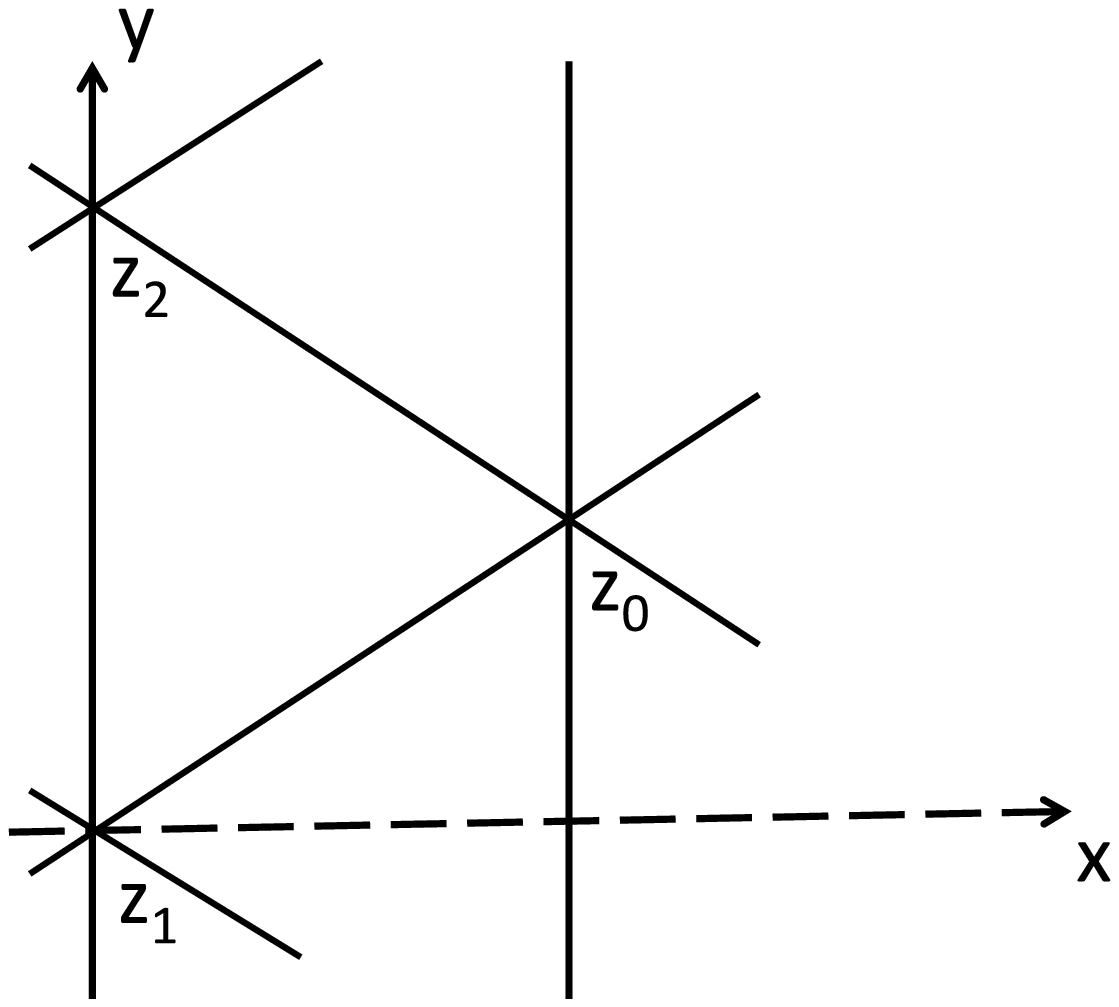}\hspace{-2.5cm}
\includegraphics[scale=.35]{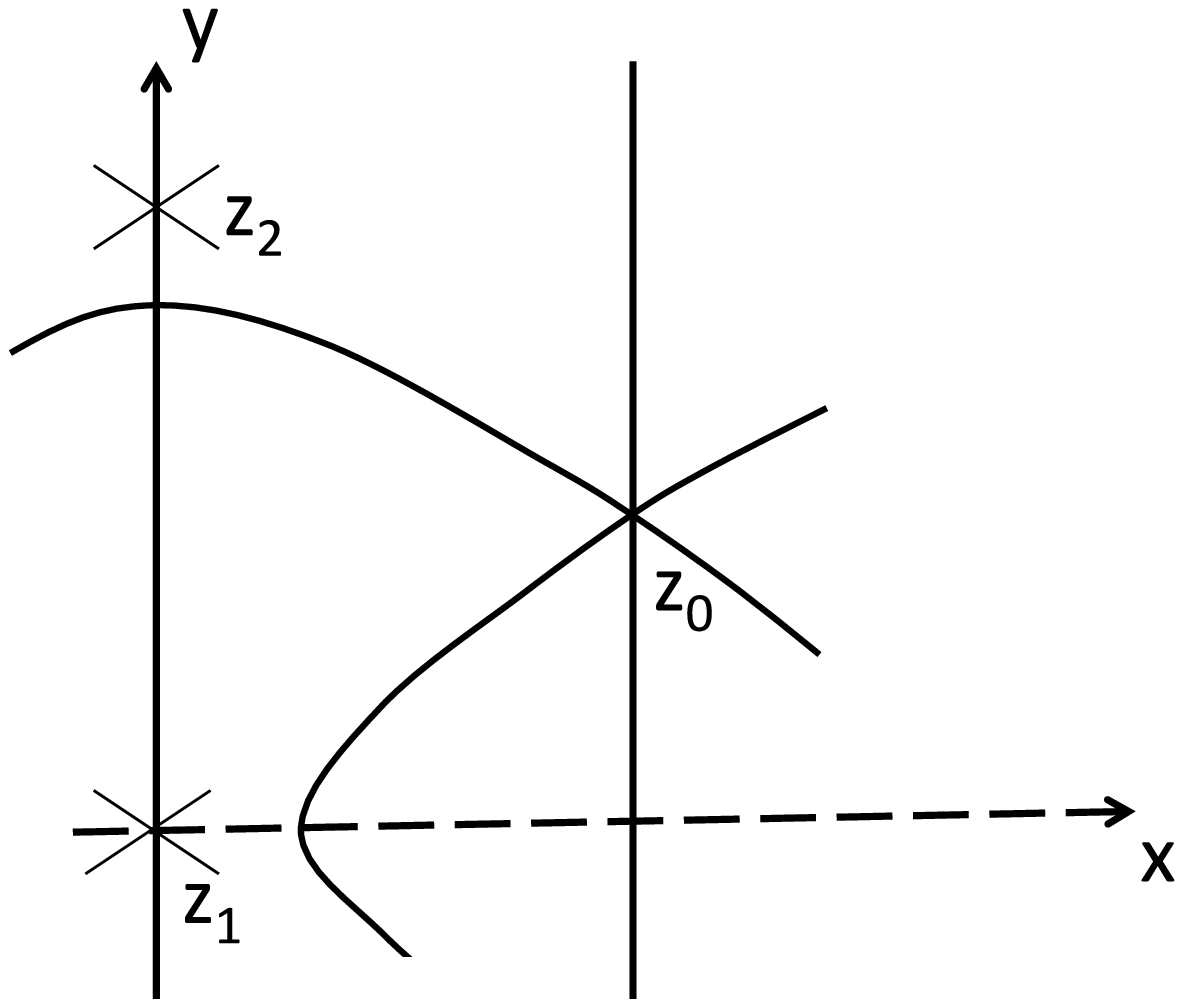}\hspace{-2.5cm}
        \caption[]{\small The solid lines in the left figure show
           the nodal lines of $w$ intersecting at the
          degenerate zeros $z_0$, $z_1$, $z_2$. The right figure depicts
          the effect of the perturbation $w+\ep \psi$ on the
          nodal set near  $z_0$, $z_1$, $z_2$, and in $(0,\pi)\times
          (0,2\pi/\sqrt3)$  under assumptions
          \rm(W1)--(W5) (Lemma \ref{lewv}).  The 
          whole domain $\Omega$
          can be recovered from the right picture by performing, in
          succession,  reflections about the lines $x=\pi$, $y=0$, and
           $x=0$. 
         \label{figb}   }
          \end{center}
         \end{figure} 

\begin{lemma}
  \label{lepsi}
There exist is an analytic
function $\psi:\R^2\to \R$ such that {\rm(W1)--(W5)} hold.
\end{lemma}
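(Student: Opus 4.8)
The plan is to construct $\psi$ explicitly as a linear combination of a small number of separated-variable solutions of $\Delta v + 4v = 0$, each chosen to respect the symmetry constraints (W2)--(W3) automatically, and then to adjust the finitely many coefficients so that the pointwise conditions (W4)--(W5) hold. Conditions (W2) and (W3) suggest the building blocks $\sin(mx)\cosh(\mu y)$ and $\sin(mx)\cos(ky)$ with $m^2 \pm (\mu^2 \text{ or } k^2) = 4$: the factor $\sin(mx)$ with integer $m$ makes $\psi$ vanish on $\{x = k\pi\}$ and be odd about each such line (since $\sin(m(2k\pi - x)) = -\sin(mx)$), while the even factor in $y$ gives (W3). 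So a natural first family is $\psi_m(x,y) = \sin(mx)\phi_m(y)$ where $\phi_m$ solves $\phi_m'' + (4 - m^2)\phi_m = 0$ with $\phi_m$ even; i.e. $\phi_m(y) = \cos(\sqrt{4-m^2}\, y)$ for $m \in \{1,2\}$ and $\phi_m(y) = \cosh(\sqrt{m^2-4}\, y)$ for $m \geq 3$.

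Next I would count degrees of freedom against constraints. Conditions (W4)--(W5) are imposed at the three points $z_0, z_1, z_2$. By the symmetry (W2)--(W3) many of the derivative conditions at these special points are automatic: at $z_1 = (0,0)$ and $z_2 = (0, 2\pi/\sqrt3)$, which lie on the axis $x = 0$ where $\psi$ is odd in $x$ and even in $y$, one has $\psi = \psi_y = \psi_{yy} = \psi_{xy}$ automatically (odd functions of $x$), wait — more carefully, $\psi(0,y) = 0$ and $\psi_y(0,y)=0$, while $\psi_x(0,y)$ and $\psi_{xy}(0,y)$ are the nontrivial quantities; at $z_0 = (\pi, \pi/\sqrt3)$, which sits on two nodal lines of $w$, the constraint $D\psi(z_0) = 0$, $D^2\psi(z_0) = 0$ is five scalar equations (the point lies on $\{x=\pi\}$ so $\psi(z_0)=0$ and on the reflection line, giving some free reductions, but one should just treat it as up to five conditions). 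Counting: roughly five equations from (W4) plus three sign inequalities from (W5), so a handful of basis functions — say $\psi_1, \psi_2, \psi_3, \psi_4$ and perhaps one more — should provide enough parameters. One then writes the linear system $A c = 0$ for the equalities (W4) and checks, by an explicit determinant or rank computation, that the solution space is at least one-dimensional and contains a vector satisfying the open conditions (W5); the latter is automatic once the solution space has positive dimension and the relevant linear functionals $\psi_x(z_1)$, $\psi_x(z_2)$, $\psi_{xy}(z_2)$ do not all vanish identically on it, after possibly rescaling by $-1$ and perturbing within the kernel.

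The main obstacle I anticipate is not the existence of enough parameters but verifying that the homogeneous linear system coming from (W4) genuinely has a nontrivial solution on which the three quantities in (W5) can be made to have the prescribed signs — in principle the symmetry constraints could force unwanted degeneracies (e.g. the kernel might lie inside the zero set of $\psi_x(z_1)$). This is a concrete but slightly delicate linear-algebra verification: one must compute the values and first/second derivatives of each basis function $\sin(mx)\cos(\sqrt{4-m^2}y)$ and $\sin(mx)\cosh(\sqrt{m^2-4}y)$ at $z_0, z_1, z_2$ — these involve $\sin(m\pi) = 0$, $\cos(m\pi) = (-1)^m$, and values of $\cos$/$\cosh$ at $\pi/\sqrt3$ and $2\pi/\sqrt3$ — assemble the matrix, and exhibit an explicit kernel vector together with the signs it produces. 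Since everything reduces to arithmetic with trigonometric/hyperbolic values at fixed points, the computation is routine once set up, and analyticity of the resulting $\psi$ (W1) is immediate because each basis function is entire.
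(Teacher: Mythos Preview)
Your approach is essentially that of the paper: build $\psi$ as a finite sum of terms $\sin(mx)\cosh(\sqrt{m^2-4}\,y)$ (automatically satisfying (W1)--(W3)) and solve a linear system in the coefficients to enforce (W4)--(W5). The paper streamlines your plan in two ways: it observes that oddness about $\{x=\pi\}$ already forces $\psi_y(z_0)=\psi_{xx}(z_0)=\psi_{yy}(z_0)=0$, so (W4) is only the two equations $\psi_x(z_0)=\psi_{xy}(z_0)=0$; and rather than seeking a kernel and then checking the open sign conditions, it replaces (W5) by the equalities $\psi_x(z_1)=-1$, $\psi_x(z_2)=\psi_{xy}(z_2)=1$, obtaining a square $5\times5$ inhomogeneous system whose nonsingularity is shown not by explicit arithmetic but by a growth argument (choose five even integers $k_1<\dots<k_5$ successively large, so that in each leading minor the last column is dominated by its fastest-growing hyperbolic entry).
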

We postpone the proof of this lemma until the end of this section.

\begin{lemma}
  \label{lewv}
Assume that  $\psi:\R^2\to \R$ is an analytic function satisfying
 {\rm(W1)--(W5)}. If $\ep>0$ is sufficiently small, then the
function $v=w+\ep \psi$ has the following properties:
\begin{itemize}
 \item[\rm (V1)]\quad {$v$ is a solution of \eqref{eqlin} on $\R^2$,}
 \item[\rm (V2)]\quad $v(k\pi,\cdot)\equiv 0$ and
$v$  is odd about the vertical line $\{x=k\pi\}$ for each $k\in \Z $,
 \item[\rm (V3)]\quad
 $v$  is even about the $x$-axis. 
\item[\rm (V4)] \quad There exist $s\in (\pi/\sqrt3,2 \pi/\sqrt3 )$ 
  and a continuous  function $\mu$
  on $[-s,s]$ with the following properties:
  \begin{itemize}
  \item[(i)] $\mu$ is even, $0<\mu<2\pi$ on $[0,s)$, $\mu(s)=0$, and 
  $\mu(\pi/\sqrt3)=\pi$,
   \item[(ii)] $\mu$ is analytic in $(-s,s)$,  $\mu'<0$  on $(0,s)$,
    and $\mu(y)\mu'(y)$ has a finite limit as $y\upto s$, 
   \item[(iii)] the domain $\Om:=\{(x,y): y\in (-s,s), -\mu(y)<x<\mu(y)\}$
       is analytic,
   \item[(iv)] the nodal set of $v$ in $\bar \Om$ consists of
     $$\partial \Om= \{( m(y),y): y\in [-s,s]\} \cup \{(-
     m(y),y): y\in [-s,s]\},$$
   the line segments 
    $\{(x,y)\in \Om: x =k\pi\}$, $k=0,\pm1$, and the two analytic curves
    $\{(2\pi -\mu(y),y): y\in (-\pi/\sqrt3, \pi/\sqrt3])\}$, 
   $\{(-2\pi +\mu(y),y): y\in (-\pi/\sqrt3, \pi/\sqrt3])\}$.
  \end{itemize}
\end{itemize}
\end{lemma}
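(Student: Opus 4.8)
Properties {\rm(V1)}--{\rm(V3)} are immediate, so the work is in {\rm(V4)}. For {\rm(V1)}: \eqref{eqlin} is linear and both $w$ and $\psi$ solve it. For {\rm(V2)}: $w=(\cos(y\sqrt3)-\cos x)\sin x$ vanishes on each $\{x=k\pi\}$ and is odd about it (as $\sin(2k\pi-x)=-\sin x$, $\cos(2k\pi-x)=\cos x$), and $\psi$ has the same property by {\rm(W2)}. For {\rm(V3)}: $w$ depends on $y$ only through the even function $\cos(y\sqrt3)$, and $\psi$ is even about the $x$-axis by {\rm(W3)}.

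For {\rm(V4)} I work in $R:=[0,\pi]\times[0,2\pi/\sqrt3]$; by {\rm(V2)}--{\rm(V3)}, reflecting $R$ successively about $\{x=\pi\}$, $\{y=0\}$, $\{x=0\}$ determines $\{v=0\}$ on $[-2\pi,2\pi]\times[-2\pi/\sqrt3,2\pi/\sqrt3]$, which will contain $\bar\Om$. I first record the structure of $w$ in $R$: its nodal set there is $\{x=0\}\cup\{x=\pi\}\cup L_1\cup L_2$, where $L_1$ is the segment of $\{y=x/\sqrt3\}$ from $z_1$ to $z_0$ and $L_2$ the segment of $\{y=(2\pi-x)/\sqrt3\}$ from $z_0$ to $z_2$; along $L_1,L_2$ and along $\{x=0\},\{x=\pi\}$, away from $\{z_0,z_1,z_2\}$, one has $w_x\ne0$ (on $L_1\cup L_2$, in fact $w_x=\sin^2 x$); each $z_j$ is a triple zero whose cubic part, in local coordinates $(\xi,\eta)$ at the point (with $\xi=x-\pi$ at $z_0$), equals $\tfrac12\,\xi(\xi^2-3\eta^2)$, so the three nodal lines through $z_j$ have slopes $\infty$ and $\pm1/\sqrt3$; and $w$ has constant sign $+,-,+$ on the three triangular nodal domains $R_-$ (below $L_1$), $T$ (between $L_1\cup L_2$ and $\{x=0\}$), $R_+$ (above $L_2$). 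Next fix $\rho>0$ small so that the $B_\rho(z_j)$ are disjoint and meet no other nodal line of $w$, and put $K:=\bar R\setminus\bigcup_j B_\rho(z_j)$. On $K$, $|\nabla w|$ is bounded below on $\{w=0\}$ and $|w|$ is bounded below off a thin tube around $\{w=0\}$, so for $\ep$ small $\{v=0\}\cap K$ lies in that tube and — using $v\equiv0$ on $\{x=0\}\cup\{x=\pi\}$ and the analytic implicit function theorem on $v=0$ with $v_x\ne0$ — equals exactly the relevant segments of $\{x=0\},\{x=\pi\}$ together with a single analytic arc $\{x=g_1(y)\}$ (resp. $\{x=g_2(y)\}$), a graph $C^0$-close to $L_1$ (resp. $L_2$).

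Then I treat the three balls. Near $z_1$ and $z_2$: since $w,\psi$ vanish on $\{x=0\}$, write $v=x\tilde V$ with $\tilde V:=w/x+\ep\,\psi/x$ analytic, even in $x$ and in $y$; then $\{v=0\}=\{x=0\}\cup\{\tilde V=0\}$. The quotient $w/x$ has a nondegenerate critical point of signature $(+,-)$ at $z_j$ (positive eigendirection $\approx\partial_x$) with critical value $0$; for $\ep$ small $\tilde V$ still has such a critical point $\zeta_j^\ep$ near $z_j$, lying on the symmetry line ($\{y=0\}$ for $z_1$, $\{x=0\}$ for $z_2$), with critical value $\tilde V(\zeta_j^\ep)=\ep\,\psi_x(z_j)+O(\ep^2)$, which by {\rm(W5)} is $<0$ at $z_1$ and $>0$ at $z_2$. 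By the Morse lemma in the analytic category (equivalently: $\{\tilde V=0\}$ avoids $\zeta_j^\ep$, and the local topology is read off from the indefinite Hessian), $\{\tilde V=0\}$ in $B_\rho(z_j)$ is two analytic arcs, the branches of a hyperbola-like curve opening along $\pm\partial_x$ at $z_1$ and along $\pm\partial_y$ at $z_2$. Intersecting with $R$ and using the $y\mapsto-y$ symmetry, near $z_1$ this is one analytic arc meeting the edge $\{y=0\}$ orthogonally at a point $(a_\ep,0)$ with $a_\ep=O(\sqrt\ep)$, which at $\partial B_\rho(z_1)$ joins $g_1$; near $z_2$ it is one analytic arc meeting $\{x=0\}$ orthogonally at a point $(0,s)$ with $s=2\pi/\sqrt3-O(\sqrt\ep)\in(\pi/\sqrt3,2\pi/\sqrt3)$, which joins $g_2$. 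Since $\tilde V$ is even in $x$ and $\tilde V_y(0,s)\approx 3(2\pi/\sqrt3-s)>0$, the last arc is $\{y=\varphi(x^2)\}$ with $\varphi$ analytic, $\varphi(0)=s$, $\varphi'(0)<0$; its right branch is $\{x=\mu(y)\}$ with $\mu(y)\sim A\sqrt{s-y}$ ($A>0$), which yields $\mu'<0$ near $s$, $\mu\mu'\to-A^2/2$ as $y\upto s$, and makes $\Om$ an honest domain at the pinch point.

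Near $z_0$: since $\psi$ vanishes on $\{x=\pi\}$, write $v=(x-\pi)V$ with $V$ analytic and even about $\{x=\pi\}$; using {\rm(W4)} one computes $V(z_0)=0$, $\nabla V(z_0)=0$, $D^2V(z_0)=\diag(1,-3)+O(\ep)$, a nondegenerate indefinite form, so by the analytic Morse lemma $\{V=0\}$ near $z_0$ is two analytic arcs crossing transversally at $z_0$ with slopes $\pm1/\sqrt3+O(\ep)$. These slopes being neither $0$ nor $\infty$, the arcs cannot each be symmetric about $\{x=\pi\}$, hence they are mirror images: an arc $C$ of slope $\approx1/\sqrt3$ and $P(C)$ of slope $\approx-1/\sqrt3$, so $\{v=0\}$ near $z_0$ is $\{x=\pi\}\cup C\cup P(C)$, and within $R$, $C$ lies in $\{y<\pi/\sqrt3\}$ and joins $g_1$, while $P(C)$ lies in $\{y>\pi/\sqrt3\}$ and joins $g_2$. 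Concatenating the three local pictures gives one analytic arc $\Ga_1$ (the $z_1$-branch $\cup\, g_1\cup C$) from $(a_\ep,0)$ to $z_0$, and one analytic arc $\Ga_2$ ($P(C)\cup g_2\cup$ the $z_2$-branch) from $z_0$ to $(0,s)$; moreover $P(\Ga_1)$ and $\Ga_2$ fit together at $z_0$ as the single analytic arc carried by $P(C)$, so $\Ga:=P(\Ga_1)\cup\Ga_2$ is an analytic arc from $(2\pi-a_\ep,0)$ to $(0,s)$, a graph $x=\mu(y)$ with $\mu$ strictly decreasing, $\mu(0)=2\pi-a_\ep\in(\pi,2\pi)$, $\mu(\pi/\sqrt3)=\pi$, $\mu(s)=0$. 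Extending $\mu$ evenly and setting $\Om:=\{|y|<s,\ |x|<\mu(|y|)\}$ gives an analytic Lipschitz domain, convex in $x$, symmetric about both axes, with $\partial\Om\subset\{v=0\}$ (as $v$ is odd about $\{x=0\}$ and even about $\{y=0\}$); this is {\rm(i)}--{\rm(iii)}. For {\rm(iv)} one intersects the now completely determined set $\{v=0\}$ on the four reflected copies of $\bar R$ with $\bar\Om$: since $\mu<2\pi$ and $s<2\pi/\sqrt3$, $\Om$ is covered, and $v$ keeps a definite sign $+,-,+$ on the perturbed nodal domains $R_-,T,R_+$ and inside the balls (from the Morse pictures; e.g. $v=(x-\pi)V=(-)(-)>0$ in the $R_-$ sector at $z_0$), so one reads off that $\{v=0\}\cap\bar\Om$ is precisely $\partial\Om$, the segments $\{x=k\pi\}\cap\Om$ for $k=0,\pm1$, and $\Ga_1$ together with its mirror image $-\Ga_1=\{(-2\pi+\mu(y),y)\}$ about $\{x=0\}$. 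The delicate part is the local study at the degenerate zeros under the reflection constraints — above all at $z_0$, where one must see that exactly the right two of the three surviving nodal branches glue into a single analytic curve (the future $\partial\Om$) while the third remains $\{x=\pi\}$ — together with the orthogonal incidences of the new arcs with the edges $\{y=0\}$ and $\{x=0\}$, which is what makes $\Om$ a genuine analytic domain realizing the stated $\mu$.
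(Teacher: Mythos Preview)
Your proof is correct and follows the same overall architecture as the paper's: reduce by symmetry to the rectangle $R=[0,\pi]\times[0,2\pi/\sqrt3]$, factor out the forced line $\{x=k\pi\}$ at each degenerate zero $z_j$ by writing $v=(\sin x)\tilde v$ (equivalently $v=(x-k\pi)V$), analyze the quotient locally, apply the implicit function theorem on the compact set away from the $z_j$, and glue the pieces into the single monotone graph $x=\mu(y)$.

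The one genuine methodological difference is in the local analysis. You invoke the analytic Morse lemma uniformly at all three points: at $z_1,z_2$ the quotient $\tilde V$ has a nondegenerate saddle with critical value $\ep\,\psi_x(z_j)+O(\ep^2)$, and the sign from (W5) tells you which way the hyperbola opens; at $z_0$, (W4) forces $V(z_0)=0$, $\nabla V(z_0)=0$, $D^2V(z_0)\approx\diag(1,-3)$, and the Morse lemma gives two transversal arcs, which the reflection symmetry pairs up. The paper instead combines the Morse lemma at $z_0$ with the classical equal-angle theorem for planar nodal sets (to count exactly six branches), and at $z_1,z_2$ avoids Morse entirely in favor of explicit second-derivative sign computations ($\tilde v_{xx}>0$, $\tilde v_{yy}<0$, $\tilde v_y>0$) that produce the local curves directly as monotone analytic graphs $x=\xi(y)$, $y=\eta(x)$. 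Your route is conceptually cleaner and more uniform; the paper's hands-on calculations yield the strict monotonicity $\mu'<0$ on $(0,s)$ more immediately (in your version this comes out of the hyperbola picture and the $C^1$-closeness to $L_1,L_2$, which you state but could make more explicit). A small bonus of your approach is that it never actually calls on the hypothesis $\psi_{xy}(z_2)>0$ from (W5); the paper uses it to get $\tilde v_y>0$ up to the edge $y=2\pi/\sqrt3$, whereas your saddle-value argument needs only $\psi_x(z_2)>0$.
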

Note that according to (V4), the nodal set of $v$ is as in Figure~2. 

\begin{proof}[Proof of Lemma \ref{lewv}.]
  Properties (V0)-(V3) follow immediately from (W1)-(W3)
  (independently of the choice of $\ep$). Let us now consider the nodal set
  of $v$ in $[-2\pi,2\pi]\times [-2\pi/\sqrt3, 2\pi/\sqrt3]$.  By the
  symmetry properties (V2), (V3), we only need to understand the
  nodal set in $[0,\pi]\times [0,2\pi/\sqrt3]$; the rest is determined
  by reflections. 
   We first
  investigate the nodal set of $v$ near the degenerate zeros
  $z_0$, $z_1$, $z_2$ of $w$.

\vspace{8pt}
\emph{Local analysis near  $z_0=(\pi,\pi/\sqrt3)$.} 
By  (W2), $v$ is odd about the vertical line $x=\pi$, in
particular, $v(z_0)=0$. By \eqref{wdef} and (W4),  
\begin{equation}
  \label{zdeg}
  Dv(z_0)=0,\quad      D^2v(z_0)=0.
\end{equation}

We next apply to $v$ the
following well-known equal-angle property
of the nodal set of solutions of a planar linear elliptic equations
(see, for example, \cite{Alessandrini,Hartman-W} or \cite[Theorem
2.1]{Terracini-H-HO}). From such well known results, $v$ has a finite
order, say $j$, of vanishing at $z_0$. Moreover, there is a ball $B$ 
centered at $z_0$ such that 
the nodal set of $v$ in $B$ consists of  $k:=2j$ $C^1$-curves ending at
$z_0$ and having  
tangents at $z_0$, and these  tangents form  $k$ angles of equal
size. In the present case,  relations \eqref{zdeg} imply $j\ge 3$, hence
$k\ge 6$.

We claim that if $\ep$ is sufficiently small,
then $k=6$. Indeed, since $\psi$ is odd about the line $x=\pi$, we
can write it as $\psi(x,y)=\tilde \psi(x,y) \sin x$, where $\tilde
\psi$ is an analytic function, which is even about $x=\pi$. 
Then also $v(x,y)=\tilde
v(x,y)\sin x$, where  $\tilde v$ is still an
analytic function, which is even about $x=\pi$. 
A simple computation
shows that $D\tilde v(z_0)=0$ and, for small $\ep>0$, $D^2\tilde
v(z_0)\approx \diag(-1,3)$. The Morse lemma implies that the nodal set
of $\tilde v$ near $z_0$ consists of two smooth curves transversally intersecting at
$z_0$. These can be viewed as four curves ending at  $z_0$, which 
together  with  two segments of the vertical line $\{x=\pi\}$ 
exhaust the nodal set of $v$ near  $z_0$. This gives   $k\le 6$, hence
$k=6$ as claimed. 

Now,   the fact that $v$ is odd about $x=\pi$, in conjunction
with the equal angle condition, implies the following conclusion.

\begin{list}{{\bf (C0)}} { \usecounter{lcount}
\setlength{\labelwidth}{10pt}
\addtolength{\leftmargin}{-10pt}
}
\item If  $r_0>0$ is a sufficiently small radius, then in the ball
  $B(z_0,r_0)$ the set $v^{-1}(0)$  consists of the vertical line 
segment $\{(x,y)\in B(z_0,r_0):x=\pi\}$ and two smooth curves $\Ga_1$
and  $P_\pi(\Ga_1)$, where  $P_\pi$ denotes the  reflection about 
the line $\{x=\pi\}$. The two curves intersect at $z_0$,  $\Ga_1$ 
is tangent at   $z_0$ to $(\sqrt3,-1)$, hence (with small enough
$r_0$) at each of its points, $\Ga_1$ is tangent to a vector in 
$\{(x,y): x>0,y<0\}.$
\end{list}
We shall presently see  that the curve $\Ga_1$ in this conclusion is
actually analytic. By the evenness about $\{x=\pi\}$, $\tilde
v(x,y)=\varphi((x-\pi)^2,y)$, where $\varphi(q,r)$ is an analytic function
near $(q,r)=(0,\pi/\sqrt3)$. For small $\ep>0$, we have 
$\varphi_q(0,\pi/\sqrt3)\ne 0$, hence the zeros of $\varphi$ near
$(0,\pi/\sqrt3)$ are given by $q=a(y)$, where $a$ is an analytic
function satisfying $a(\pi/\sqrt3)=0$. 
Then the nodal set of $\tilde v$ near $z_0$ is given by 
the equation $(x-\pi)^2 =a(y)$. Since we already know that the nodal
set consists of two smooth curves, it is easy to verify they must be
analytic.

\vspace{8pt}
\emph{Local analysis near    $z_1=(0,0)$.} 
 In view of (W2) and (W3), in a neighborhood of $z_1$ we
have $\psi(x,y)=\tilde \psi(x,y) \sin x$, where $\tilde
\psi(x,y)$ is an analytic function, which is even about the coordinate
axes. 
Denote 
$$\tilde w(x,y):=\cos (y\sqrt3)-\cos x,\quad \tilde v:=
\tilde w +\ep \tilde \psi.$$
By (W5),  $\tilde \psi(0,0)=\psi_x(0,0)<0$.
Further,  $\tilde w(0,y)=\cos (y\sqrt3)-1\le 0$  
and $\tilde w_{xx}(x,y)=\cos x >0$ if
$x\approx 0$. Therefore,  
there exist positive constants $\al<\pi$, $\be<\pi/\sqrt 3$,
such that   $\tilde \psi<0<\tilde w_{xx}$ in the rectangle
$[0,\al]\times [-\be,\be]$   (this is true
regardless of $\ep$, as long as  $\ep>0$). Consequently, 
$\tilde v<0$ on the segment $\{0\}\times  [-\be,\be]$ and, 
if $\ep>0$ is sufficiently
small, also  $\tilde
v_{xx}>0$ in $[0,\al]\times [-\be,\be]$. The latter and the 
relation $\tilde
v_{x}(0,y)=0$ (which follows from the 
evenness of $\tilde v$) imply that $\tilde
v_{x}>0$ in $(0,\al]\times [-\be,\be]$. Finally, making $\be>0$
smaller if necessary, we have $\tilde w>0$ on the segment
$\{\al\}\times[-\be,\be]$, hence $\tilde v>0$ on that segment
if $\ep>0$ is sufficiently small. 
We conclude, that if $\ep>0$ is sufficiently small, then for each $y\in
[-\be,\be]$, the function $\tilde v(\cdot,y)$ has a unique zero
$x=\xi(y)$ in $(0,\al)$. By the implicit function theorem, the
function $\xi$ is analytic and, by the uniqueness, $\xi$ is even. 

We now show that $\xi'(y)>0$ for $y>0$. 
Differentiating the identity $\tilde v(\xi(y),y)=0$, we obtain
$\tilde v_x\xi' +\tilde v_y\equiv 0$. Since $\tilde v_x>0$ in 
$(0,\al]\times [-\be,\be]$,  we need to show
that $\tilde v_y<0$ in $(0,\al]\times (0,\be]$. By the evenness about
$\{y=0\}$, $\tilde v_y=0$ when $y=0$. Since $\tilde v_{yy}(x,y)=-3\cos
(y\sqrt3)+\ep\tilde \psi_{yy}$, making $\be$ smaller, if necessary, we
achieve that  $\tilde v_{yy}<0$ in $(0,\al]\times (0,\be]$, 
for all sufficiently
small $\ep>0$. This gives 
$\tilde v_{y}<0$ in $(0,\al]\times (0,\be]$, as desired. 

We summarize that for some positive constants $\al<\pi$,
$\be<\pi/\sqrt 3$, the following statement is valid:

\begin{list}{{\bf (C1)}} { \usecounter{lcount}
\setlength{\labelwidth}{10pt}
\addtolength{\leftmargin}{-10pt}
}
\item For all  sufficiently small $\ep>0$, the nodal set of $v$ in 
$[0,\al]\times [-\be,\be]$ consists of the segment $\{0\}\times
 [-\be,\be]$ and the
 curve $\Ga_2:=\{(\xi(y),y):y\in [-\be,\be]\}$, where
 $\xi:[-\be,\be]\to (0,\al)$ is an even 
 analytic function with $\xi'>0$ on $(0,\be]$. 
\end{list}

\vspace{8pt}
\emph{Local analysis near  $z_2=(0,2\pi/\sqrt3)$.} We proceed 
similarly as  in the previous analysis. In a neighborhood of $z_2$, we
have $\psi(x,y)=\tilde \psi(x,y) \sin x$, where $\tilde
\psi(x,y)$ is an analytic function, which is even about  $\{x=0\}$. We
set
$$\tilde w(x,y):=\cos (y\sqrt3)-\cos x,\quad \tilde v:=
\tilde w +\ep \tilde \psi.$$ 
The functions $\tilde v$ and $\tilde \psi$ are
even about the $y$-axis. By  (W5), 
\begin{equation*}
  \text{$\tilde \psi(0,2\pi/\sqrt3)= \psi_x(0,2\pi/\sqrt3) >0$ and 
 $\tilde \psi_y(0,2\pi/\sqrt3)= \psi_{yx}(0,2\pi/\sqrt3) >0$.}
\end{equation*}
 Further, $\tilde w(x,2\pi/\sqrt3 )=1-\cos x\ge 0$ and 
 \begin{equation*}
   \text{
$\tilde w_y(x,2\pi/\sqrt3 )=-\sqrt3 \sin (y\sqrt3) \ge 0$ for $y\le
2\pi/\sqrt3$,  $y\approx
2\pi/\sqrt3$.  } 
 \end{equation*}
Therefore,  
there exist positive constants $\ga,  \de$
such that  $\tilde v>0$ on the segment
  $[-\ga,\ga]\times
\{2\pi/\sqrt3\}$
and $\tilde v_y>0$ in  $[-\ga,\ga] \times [2\pi/\sqrt3-\de,2\pi/\sqrt3]$
(this is true for each  $\ep>0$).
Making $\ga$ smaller if necessary, we also have 
 $\tilde w<0$ on the segment
$[-\ga,\ga] \times \{2\pi/\sqrt3-\de\}$,  
hence $\tilde v<0$ on that segment if $\ep>0$ is sufficiently small. 
Thus for each small $\ep>0$ and for each $x\in  [-\ga,\ga]$, the function
$\tilde v(x,\cdot)$  has a unique zero $\eta(x)$ 
in $(2\pi/\sqrt3-\de,2\pi/\sqrt3)$ and
the function $\eta$ is  analytic and even. 
We shall show in a moment that, possibly after making $\ga>0$ smaller, 
for all sufficiently small $\ep>0$
one has $\eta''(0)<0$ and $\eta'<0$
on $(0,\ga]$. 
Therefore, with $s:=\eta(0)$, 
the following conclusion is valid.

\begin{list}{{\bf (C2a)}} { \usecounter{lcount}
\setlength{\labelwidth}{10pt}
\addtolength{\leftmargin}{-10pt}
}
\item If $\ep>0$  is sufficiently small, the nodal set of $v$ in 
$[-\ga,\ga] \times [2\pi/\sqrt3-\de,2\pi/\sqrt3]$ consists of the
vertical segment $\{0\}\times[2\pi/\sqrt3-\de,2\pi/\sqrt3]$ and the
 curve $\Ga_3:=\{(x,\eta(x)): x\in  [-\ga,\ga]\}$. Here 
 $\eta:[-\ga,\ga]\to (2\pi/\sqrt3-\de,2\pi/\sqrt3)$ is an even 
 analytic function with $\eta''(0)<0$ and $\eta'<0$ on
 $(0,\ga)$.
\end{list}

To verify that, indeed, $\eta'<0$ for all sufficiently small $\ep$,
differentiate   the identity $\tilde v(x,\eta(x))=0$. This gives
$\tilde v_x+\tilde v_y\eta' \equiv 0$ and since $\tilde v_y>0$ in 
$[-\ga,\ga] \times [2\pi/\sqrt3-\de,2\pi/\sqrt3]$, it is sufficient to
 verify that  $\tilde v_x>0$ in $(0,\ga] \times [2\pi/\sqrt3-\de,2\pi/\sqrt3]$. 
We have $\tilde v_x=0$ when $x=0$ (by the evenness) and 
$\tilde v_{xx}(x,y)=\cos x+\ep \tilde \psi_{xx}(x,y)$. Thus, making $\ga$
smaller, if necessary, we have $\tilde v_{xx}>0$ in 
$[0,\ga] \times [2\pi/\sqrt3-\de,2\pi/\sqrt3]$ for each sufficiently small
$\ep>0$. This gives $\tilde v_x>0$ in $(0,\ga] \times
[2\pi/\sqrt3-\de,2\pi/\sqrt3]$, as needed. Also
$\eta''(0)=-\tilde v_{xx}(0,s)/\tilde v_y(0,s)<0$ for all sufficiently
small $\ep>0$. 

Below it will be useful to have introduced  the inverse function to  
 $\eta\rest_{(0,\ga)}$. Using  (C2a), elementary arguments verify the
 following statements. 
\begin{list}{{\bf (C2b)}} { \usecounter{lcount}
\setlength{\labelwidth}{10pt}
\addtolength{\leftmargin}{-10pt}
}
\item With $\eta$ as in in (C2a), the function 
$\zeta:= (\eta\rest_{(0,\ga)})^{-1}:  (\eta(\ga),s)\to (0,\ga)$
is analytic, $\zeta'<0$ on $(\eta(\ga),s)$ and  $\xi(y)\xi'(y)$ 
has a finite limit as $y\upto s$.
\end{list}

\vspace{5pt} 

We now give a global description of the nodal set of
 $v$ in $[0,\pi]\times [0,2\pi/\sqrt3]$.  Since $v$ is an analytic
 function,  
the implicit
  function theorem implies that away from the degenerate zeros of 
  $v$, the nodal set of $v$  consists of
  analytic curves. Using the explicit structure of the 
   nodal set of $w$, as  given in \eqref{nodlinesx},
   \eqref{nodlinesxy}, and the fact that $z_0$, $z_1$, $z_2$  are the only
  degenerate zeroes of $w$ in  $[0,\pi]\times [0,2\pi/\sqrt3)$, a simple
  continuity argument leads to the  the following conclusion.  
\begin{list}{{\bf (CG)}} { \usecounter{lcount}
\setlength{\labelwidth}{10pt}
\addtolength{\leftmargin}{-10pt}
}
\item For any $r>0$,
  there is $\ep_0>0$ such that for each $\ep\in (0,\ep_0)$ 
  the nodal set  of $v$ in 
$$G:= [0,\pi]\times [0,2\pi/\sqrt3]\setminus 
  \bigcup_{i=1,2, 3} B(z_i,r)$$
consists of segments of the vertical lines $\{x=0\}$, $\{x=\pi\}$,
and two analytic curves 
$\Ga_4$, $\Ga_5$,  $C^1$-close to the
line segments  $\{(x,y)\in G: x=2\pi-y\sqrt3\}$, 
$\{(x,y)\in G: x=y\sqrt3\}$, respectively.
In particular, $\Ga_4$ is at each of its points
tangent to a vector $\{(x,y): x>0, y<0\}$
 and $\Ga_5$  is at each  point
tangent to a vector in $\{(x,y): x>0,
y>0\}$. 
\end{list}

 To complete the proof of Lemma \ref{lewv}, we choose
 $r>0$ smaller than each of the positive 
 constants $r_0$,  $\al$, $\be$, $\ga$,
$\de$ appearing in  (C0)-(C2), so that
 \begin{align*}
  \bar  B(z_0,r)&\subset B(z_0,r_0), \\
  \bar B(z_1,r)&\subset 
( -\al,\al)\times (-\be,\be), \\ 
 \bar B(z_2,r)&\subset 
( -\ga,\ga)\times (-\de,\de). 
 \end{align*}
Then, by  (C0) and (C2a),
\begin{equation}
  \label{eq:1}
  \begin{aligned}
 \Ga_4 \cap \left((0 ,\ga)\times (2\pi/\sqrt3-\de,2\pi/\sqrt3)\right)& \subset
\Ga_3,\\
\Ga_4\cap B(z_0,r)& \subset \Ga_1.
\end{aligned}
\end{equation}
Similarly, by  (C0) and (C1),
\begin{equation}
  \label{eq:2} 
\begin{aligned}
 \Ga_5\cap B(z_0,r)& \subset P_\pi(\Ga_1),\\
\Ga_5\cap\left((0,\al)\times (0,\be)\right)& \subset \Ga_2,
\end{aligned}
\end{equation}
which is equivalent to 
\begin{equation}
  \label{eq:3} 
\begin{aligned}
 P_\pi( \Ga_5\cap B(z_0,r))& \subset \Ga_1,\\
 P_\pi( \Ga_5\cap((0,\al)\times (0,\be)))& \subset  P_\pi(\Ga_2).
\end{aligned}
\end{equation}

By \eqref{eq:1}, \eqref{eq:3}, the union
$$
\Ga:=\Ga_3\cup \Ga_4\cup \Ga_1 \cup P_\pi(\Ga_5)\cup P_\pi(\Ga_2)
$$
is an analytic curve. 
Moreover,  (C0)-(CG) imply that at each point of $\Ga\cap (
(0,2\pi)\times (0,2\pi/\sqrt3))$, $\Ga$ has a tangent vector in
$\{(x,y):x>0,y<0\}$. Therefore, there is an
analytic function
 $\mu:(0,s)\to (0,2\pi)$  such that  $\mu'<0$ on $(0,s)$, and 
\begin{equation*}
  \Ga\cap \left(
(0,2\pi)\times (0,2\pi/\sqrt3)\right)=\{(\mu(y),y):y\in (0,s)\}.
\end{equation*}
Clearly,  $(\mu(\pi/\sqrt3),\pi/\sqrt3)=z_0$, so
$\mu(\pi/\sqrt3) =\pi$. Moreover, 
near $y=0$, $\mu$ coincides with the function 
$P_\pi\circ \xi$ and near $y=s$ it coincides with the function
$\zeta$ (see (C1), (C2b)). Therefore, $\mu$ extends to a continuous
even function on $[-s,s]$,  analytic in $(-s,s)$, 
which satisfies   statements (i),(ii) of
(V4). Define $\Om$ as in (V4)(iii). Since $\mu\equiv
\zeta=(\eta\rest_{(0,\ga)})^{-1}$ near $y=s$ 
and $\eta$ is an even analytic
function, $\Om$ is an analytic domain.
   Finally, since $v$ is odd about 
 $\{x=k\pi\}$, $k\in\Z$, the curve $\Ga$ and the whole
boundary $\partial \Om$ belong to the nodal set of $v$. The oddness
of $v$ and the global description of the nodal set of $v$ in 
$[0,\pi]\times [0,2\pi/\sqrt3]$, as given above, imply that the nodal
set of $v$ in $\Om$ is as stated in  (V4)(iv). 
This completes the proof of Lemma \ref{lewv}.
\end{proof}

We next prove that Theorem \ref{thm} follows from Lemma \ref{lewv}.

\begin{proof}[Proof of Theorem \ref{thm}]
  Let $v$, $\mu$, and $\Om$ be as in Lemma \ref{lewv}. Then $\Om$ is an
  analytic domain, which is convex in $x$ and symmetric about the
  $y$-axis (it is also convex in $y$ and symmetric about the
  $x$-axis).  Replacing $v$ with $-v$, we can assume that 
  \begin{equation}
    \label{eq:4}
    \text{$v>0$ in   $\{(x,y)\in \Om:x\le -\pi$\}, }
  \end{equation}
  which is the left-most nodal domain of $v$. 
 For each $(x,y)\in \bar \Om$ we define
 \begin{equation*}
   u(x,y):=\int_{\mu(y)}^xv(t,y)\,dt. \quad
 \end{equation*}
 Then $u$ is analytic in $\Om$ and continuous on $\partial \Om$.
 Since $v$ is odd about the lines $\{x=k\pi\}$, $k\in \{-1,0,1\}$, $u$
vanishes on $\partial \Om$ and on 
$\{(2\pi -\mu(y): y\in (-\pi/\sqrt3, \pi/\sqrt3])\}$, 
   $\{(-2\pi +\mu(y): y\in (-\pi/\sqrt3, \pi/\sqrt3])\}$. 
From \eqref{eq:4} and the oddness properties of
 $v$ it follows that $u>0$ in the rest of $\Om$. 

Next, we compute (using $v(\mu(y),y)=0$) 
\begin{align*}
  \Delta u(x,y)&=v_x(x,y)+\int_{\mu(y)}^x v_{yy}(s,y)\,ds-
  v_{y}(\mu(y),y)\mu'(y)\quad \\
&=v_x(x,y)-\int_{\mu(y)}^x(v_{xx}(s,y)+4
v(s,y))\,ds-v_{y}(\mu(y),y)\mu'(y)\\
&=v_x(\mu(y),y)-4 u(x,y)-v_{y}(\mu(y),y)\mu'(y).
\end{align*} 
This shows that $u$ solves  
\eqref{eeq} with $h(y):=v_{y}(\mu(y),y)\mu'(y)-
v_x(\mu(y),y)$. Clearly, $h$ is even and continuous (in fact, analytic) in
$(-s,s)$. Since $v_{y}(0,y)=0$ and $v_y$ is analytic, (V4)(ii) implies
that $h(y)$ has a finite limit as $y\upto s$. Therefore $h$ extends to
an even continuous function on $\R$. 
\end{proof}

It remains to prove Lemma \ref{lepsi}.
\begin{proof}[Proof of Lemma \ref{lepsi}] We look for $\psi$ in the
  form
  \begin{equation}
    \label{eqpsi}
    \psi(x,y)=\sum_{k\in A}c_k\sin(kx)\cosh(y\sqrt{k^2-4}), 
  \end{equation}
  where $A$ is a finite subset of $\{k\in \N:k>4\}$ and $c_k$ are real
  coefficients 
  to be determined. Obviously, this function satisfies (W1)-(W3) and
  \begin{equation*}
    \psi_y(\pi,y)=\psi_{xx} (\pi,y) =\psi_{yy} (\pi,y)=0\text{ at
      $y=\pi/\sqrt3$}. 
  \end{equation*}
  To meet the remaining requirements in (W4), (W5), we postulate
  \begin{equation}
    \label{systc}
     \begin{aligned}
 \psi_x(0,0)   &=-1,\quad \psi_x(\pi,\pi/\sqrt3)=0,\quad \psi_{xy}(\pi ,\pi/\sqrt3)=0,  \\
   &\psi_x(0,2\pi/\sqrt3)=1,\quad 
    \psi_{xy}(0,2\pi/\sqrt3)=1.
  \end{aligned}
  \end{equation}
Substituting \eqref{eqpsi} into \eqref{systc}, we obtain a system of
five equations to be solved for $c_k$, $k\in A$. If 
$A$ consists of five even  integers $k_1<\dots<k_5$, then 
\eqref{systc} is solvable if $\det M\ne 0$ for the $5\times 5$ 
 matrix $M$ whose rows are given by
 \begin{align*}
   (1,1,1,1,1), \ &\left(\cosh \frac{\pi\nu_j}{\sqrt3} \right)_{j=1}^5, 
\ \left(\nu_j\sinh \frac{\pi\nu_j}{\sqrt3} \right)_{j=1}^5, \\
\ &\left(\cosh \frac{2\pi\nu_j}{\sqrt3} \right)_{j=1}^5,
\ \left(\nu_j\sinh \frac{2\pi\nu_j}{\sqrt3} \right)_{j=1}^5, 
 \end{align*}
where  $\nu_j:=\sqrt{k_j^2-4}$, $j=1,\dots 5$. 
We want to select the $k_j$, inductively, 
such that the leading principal minors
(further just the minors) of $M$ have nonzero determinants. The first
minor has determinant 1.  Assume that for some $j<5$, 
 $k_1<\dots <k_j$  have been selected such that the 
$j$-th minor has nonzero determinant. 
Consider the  determinant of the
$j+1$-st minor as a function of $\nu_{j+1}$.
Expand this determinant down its $j+1$-st column. The last term in
this expansion has the fastest growth, as $\nu_{j+1}\to \infty$, and 
it is multiplied by a nonzero constant (by the induction
hypothesis). Hence if $k_{j+1}$ is sufficiently large, the
  determinant of the $j+1$-st minor is nonzero. 
This completes the induction, showing that 
the even numbers $k_j$ can be selected so as to make $\det M\ne 0$.

Thus \eqref{systc} can be solved for $c_{k_1},\dots, c_{k_5}$ and the
resulting function $\psi$ then satisfies all statements (W1)-(W5). 
\end{proof}

\bibliographystyle{amsplain} 

\providecommand{\bysame}{\leavevmode\hbox to3em{\hrulefill}\thinspace}
\providecommand{\MR}{\relax\ifhmode\unskip\space\fi MR }
\providecommand{\MRhref}[2]{%
  \href{http://www.ams.org/mathscinet-getitem?mr=#1}{#2}
}
\providecommand{\href}[2]{#2}

\end{document}